%
%
\documentclass[draft,a4paper]{amsart}

\usepackage{amssymb}
\usepackage{url}

\newtheorem{thm}{Theorem}
\newtheorem{prop}[thm]{Proposition}
\newtheorem{lem}[thm]{Lemma}
\newtheorem{cor}[thm]{Corollary}

\theoremstyle{remark}
\newtheorem{rem}[thm]{Remark}

\theoremstyle{definition}
\newtheorem{defn}[thm]{Definition}

\newcommand{\R}{\mathbb{R}}
\newcommand{\N}{\mathbb{N}}

\newcommand{\m}{\mathcal}

\DeclareMathOperator{\supp}{supp}

\setlength{\textheight}{43pc}
\setlength{\textwidth}{28pc}

\begin{document}

\title{Jets of closed orbits of Ma\~n\'e generic Hamiltonian flows}

\author{C. M. Carballo}
\address{Universidade Federal de Minas Gerais,
Av. Ant\^onio Carlos 6627, 
31270-901, Belo Horizonte,
MG, Brasil.}
\email{carballo@mat.ufmg.br}

\author{J.  A. G. Miranda}
\address{Universidade Federal de Minas Gerais,
Av. Ant\^onio Carlos 6627, 
31270-901, Belo Horizonte,
MG, Brasil.}
\email{jan@mat.ufmg.br}

\subjclass[2000]{37J99, 37C27, 37C20, 37B40 }

\maketitle

\begin{abstract}
We prove a perturbation theorem for the $k$-jets, $k\geq 2$, of the Poincar\'e map of 
a closed orbit of the Hamiltonian flow of a Tonelli Hamiltonian $H: T^*M\to \R$,
on a closed manifold $M$.
As a consequence we obtain Ma\~n\'e generic properties of Hamiltonian and 
Lagrangian flows.
\end{abstract}

\section{Introduction}

Let  $ M $ be  a closed Riemannian manifold 
and $T^*M $ its cotangent bundle endowed with the canonical symplectic structure 
$\omega= - d\Theta $, where $ \Theta $ is the Liouville $1$-form.
If $ \pi : T^* M \to M $ is the canonical projection, 
$\Theta$ is defined by
\[
\Theta_{(x,p)} (\xi)= p( d_{(x,p)} \pi (\xi) ),
\mbox{ for } \xi\in T_{(x,p)} T^* M.
\]
Let $ H :T^*M \to \R $ be  a  Hamiltonian of class $C^2$ that is  
{\it convex}, i.e., for each fiber $T_x^*M $, the restriction 
$H(x,p)$ has positive defined  Hessian, and 
{\it superlinear}, i.e., 
$\lim_{\| p\|\rightarrow \infty} \frac{H(x,p)}{\|p\|}=\infty $,
uniformly  in $ x\in M$.
A Hamiltonian like this is often called a {\em Tonelli} Hamiltonian.
The  Hamiltonian vector field $ X_H $  of $ H $ is defined by 
\[
\omega(X_H, \cdot)=dH(\cdot)
\]
and the Hamiltonian flow, $\phi_t^H: T^*M \to T^*M $, of $H$ 
is the flow corresponding to $X_H$.
Observe that this flow preserves the Hamiltonian function and the symplectic 
structure $\omega$.
The subsets $ H^{-1}(c) \subset T^*M $ are called 
energy levels of $H $.
Then the compactness of $ M $ and the superlinearity of $H$ 
imply that the nonempty energy levels are compact. 
Then the Hamiltonian flow is defined for all $ t \in \R$.    

The theory of Hamiltonian flows is closely related to that of the Lagrangian flows.
This is the reason to restrict to Tonelli Hamiltonians or Lagrangians.
Given a Tonelli, i.e., convex and superlinear,  Lagrangian 
$ L: TM \to \R $,  of class $ C^3 $, the Lagrangian flow of $L$ 
is conjugated to a Hamiltonian flow in $ T^*M$ by the 
Legendre transformation  $ \mathcal F_L : TM \rightarrow T^*M $,
defined by
\[ 
\mathcal{F}_L (x,v) = \left( x, \frac{\partial L}{\partial v} (x,v) \right).
\]
The corresponding convex Hamiltonian  $ H: T^*M \rightarrow \R $  
is
\[
H(x,p) = \max_{v \in T_xM} \{ p(v)- L(x,v)\}.
\]
Conversely, given a Hamiltonian $ H : T^* M \to \R $, 
the  corresponding  Lagrangian, defined by 
the  Legendre transformation of the Hamiltonian   
$\mathcal{F}_H : T^*M \rightarrow TM $, 
is
\[
L(x,v)= \max_{p \in T_x^* M } \{ p(v)-H(x,p)\}.
\] 
 
In this paper we  study  generic properties, in the sense of  Ma\~n\'e, 
of the jets of closed orbits of Tonelli Hamiltonian  flows.
Let  $ C^\infty (M) $ be   the space of smooth functions  
$ u: M \rightarrow \R$ endowed with the $C^\infty$ topology. 
Recall that a subset
$ \mathcal O  \subset C^\infty (M) $ is called  {\it residual} if it contains a countable
intersection of open and dense subsets.
We say that a property is {\em Ma\~n\'e generic}, if for each
Lagrangian $ L:TM\to \R$,  there exists  a residual subset   
$\m{O}  \subset  C^\infty (M) $,  
such that the property holds  for  all 
$L_u= L -u $, with  $ u \in  \m O$.
The corresponding definition in the Hamiltonian setting is obtained 
considering perturbations $H_u=H+u$. 

This genericity concept was introduced by R. Ma\~n\'e in \cite{Mane:1996a} to study
Aubry-Mather theory (see  \cite{Mather:1991}) for Lagrangian systems and, after this, 
many works were devoted to studying generic properties of Lagrangian and 
Hamiltonian systems in the sense  of Ma\~n\'e; see, for instance, 
\cite{Contreras:1999, Contreras:2002a, Massart:2003, Bernard:2008b}. 

Also in this context, E. Oliveira, in \cite{Oliveira:2008},  proved a version of the
Kupka-Smale theorem  for generic Hamiltonians on surfaces. 
In \cite{RR:2011}, L. Rifford and R. Ruggiero proved a perturbation theorem of 
the $1$-jet of the Poincar\'e map of a closed orbit for  Hamiltonians on  
a compact manifold of any dimension and, using this perturbation theorem and 
Oliveira's results, obtained a version of the Kupka-Smale theorem for Tonelli 
Hamiltonians and Lagrangians on closed  manifolds of any dimension.

\begin{thm}[Kupka--Smale]
\label{ks}
Let  $ M$ be a closed  manifold and $H: T^*M \to \R $ a $C^2$ Tonelli Hamiltonian.  
Then, for each $ c\in \R $, there exists a residual set 
$\m{KS} = \m{KS} (c) \subset C^\infty(M) $ such that every Hamiltonian  
$ H_u=H + u $, with  $ u \in  \m{KS}$, satisfies
\begin{enumerate}
\item
$H_u^{-1}(c)$ is a regular energy level,
\item
all closed orbits in $H_u^{-1}(c)$ are non-degenerate, and
\item
all heteroclinic intersections in $H_u^{-1}(c)$ are transverse.
\end{enumerate}
\end{thm}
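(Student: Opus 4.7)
The plan is to follow the classical Baire-category recipe for Kupka--Smale-type theorems. For each positive integer $n$, let $\mathcal{KS}_n(c) \subset C^\infty(M)$ denote the set of $u$ such that (i) $c$ is a regular value of $H_u$; (ii) every closed orbit of $\phi_t^{H_u}$ of period at most $n$ lying in $H_u^{-1}(c)$ is non-degenerate; and (iii) all heteroclinic intersections between hyperbolic closed orbits of period at most $n$ in $H_u^{-1}(c)$, restricted to pieces of $W^s$ and $W^u$ of size controlled by $n$, are transverse. If each $\mathcal{KS}_n(c)$ is open and dense, then $\mathcal{KS}(c) := \bigcap_n \mathcal{KS}_n(c)$ is residual and yields the theorem. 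Openness throughout rests on continuous dependence of closed orbits, Poincar\'e maps, and invariant manifolds on the parameter $u$ in the $C^\infty$ topology, via the implicit function theorem.

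Condition (i) is the easiest. By strict convexity, critical points of $H_u$ lie on the unique minimum section $p_0(x)$ of $H(x,\cdot)$, and they correspond to critical points of the smooth function $h_u(x) := H(x,p_0(x)) + u(x)$ on the compact manifold $M$. Shifting $u$ by a constant $\epsilon \in \R$ translates every critical value of $h_u$ by $\epsilon$, so by Sard's theorem the set of $\epsilon$ for which $c$ is a critical value of $H_{u+\epsilon}$ has measure zero, giving density. Condition (ii) is the heart of the matter: given a closed orbit $\gamma$ of $\phi_t^{H_u}$ for which $1$ is an eigenvalue of the linearized Poincar\'e map, I would invoke the Rifford--Ruggiero perturbation theorem on $1$-jets to produce an arbitrarily $C^\infty$-small $v \in C^\infty(M)$, supported in a tubular neighborhood of $\pi(\gamma)$, such that the Poincar\'e map of the flow of $H_u + v$ along the (implicit-function-theorem) continuation of $\gamma$ has $1$ removed from its spectrum. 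Since, generically, the closed orbits of period at most $n$ in a compact energy level form a finite set, a finite iteration of such local perturbations yields density.

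For condition (iii), once (ii) is secured one restricts attention to closed orbits whose Poincar\'e map has no eigenvalue on the unit circle; these are hyperbolic and admit local stable and unstable manifolds $W^{s,u}(\gamma)$ depending continuously on $u$. To transversalize an intersection $W^s(\gamma_1)\cap W^u(\gamma_2)$, I would pick an isolated intersection point whose projection to $M$ lies outside $\pi(\gamma_1) \cup \pi(\gamma_2)$ and choose a bump function $v \in C^\infty(M)$ supported in a small neighborhood of this projection, disjoint from the closed orbits of period at most $n$. The central technical input, again coming from the type of restricted $1$-jet perturbation control developed by Rifford and Ruggiero, is that the derivative of the relative position of $W^s$ and $W^u$ with respect to the perturbation is surjective onto the symplectic normal direction, so a generic choice of bump function achieves transversality. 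The main obstacle throughout is precisely this restricted perturbation setting: since $u$ depends only on $x \in M$, the induced variation of the Hamiltonian vector field acts only on the $\dot p$ component and spans a very thin subspace of the symplectic perturbations of $X_H$. The classical Kupka--Smale arguments for vector fields do not apply verbatim, and one is forced to exploit the Tonelli structure (convexity and superlinearity) to recover enough perturbative flexibility to both remove the eigenvalue $1$ along a closed orbit and tilt invariant manifolds at a heteroclinic intersection.
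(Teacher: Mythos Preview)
The paper does not actually prove Theorem~\ref{ks}; it is stated as a known result, attributed in the introduction to Oliveira \cite{Oliveira:2008} (for surfaces) and to Rifford--Ruggiero \cite{RR:2011} (for arbitrary dimension, via their $1$-jet perturbation theorem combined with Oliveira's arguments). So there is no ``paper's own proof'' to compare against; the authors simply invoke the theorem as background in order to build toward Theorems~\ref{T-jet} and~\ref{prop-P-Q}.

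Your sketch is nonetheless a faithful outline of how the cited references proceed: the Baire decomposition into $\mathcal{KS}_n(c)$, openness by continuous dependence, density of regularity via constant shifts, and density of non-degeneracy via the Rifford--Ruggiero $1$-jet perturbation are all correct and standard. Two points are worth tightening, however. First, for part~(iii) you assume you can pick a heteroclinic point whose base projection lies off $\pi(\gamma_1)\cup\pi(\gamma_2)$; this need not hold at an arbitrary intersection point, and the actual argument (as in Oliveira's work) requires more care in choosing where to support the perturbing potential and in controlling how the flow carries that perturbation to the tangency. Second, the Rifford--Ruggiero theorem you cite concerns the $1$-jet of the Poincar\'e map along a closed orbit; it does not directly give surjectivity onto the ``symplectic normal direction'' at a heteroclinic point, so invoking it there is not quite legitimate. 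The transversality step is genuinely the delicate one in the Ma\~n\'e setting, and your last paragraph correctly identifies why: potential perturbations act only in the $\dot p$ direction, and one must use the convexity of $H$ to propagate this into enough directions along the flow. If you want a self-contained argument you should consult \cite{Oliveira:2008} and \cite{RR:2011} rather than treat this as a routine adaptation of the classical Kupka--Smale proof.
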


The main  goal of the present paper is to complete this result
to include conditions on the higher order derivatives of the Poincar\'e maps of
closed orbits.  
It is  motivated by the fact that  
the dynamical behavior  near  elliptic  closed orbits  depend on 
the higher order derivatives  of the corresponding Poincar\'e map
(see Corollary \ref{twist-n}).

What we do here is analogous to what has been done for other 
classes of conservative systems.
Let us consider, for instance,  the geodesic flows, and recall
that a bumpy metric is a metric such that all closed geodesics are non-degenerate.
The bumpy metric theorem states that 
the subset of bumpy metrics is a residual subset of the space of smooth Riemannian metrics  
on $M$.
This  theorem is attributed to R. Abraham \cite{Abraham:1970}; 
see also D. V. Anosov \cite{Anosov:1982}, where a complete proof is given.
In \cite{Klingenberg:1972},
Klingenberg and Takens extend the bumpy metric theorem to include conditions on 
the $k$-jets of the Poincar\'e map over closed orbits for geodesic flows.
For the class of magnetic flows on surfaces, a complete study of  generic properties 
of closed orbits can be found in \cite{Miranda:2006a}.

Let us  recall some facts about  the jet space of symplectic diffeomorphisms.
We consider the space of smooth diffeomorphisms 
$f : \R^{m}\to \R^{m}$ such that $f(0) =0 $.
Given $ k \in \N $, we say that the diffeomorphisms $f$ and $g$ are 
{\em $k$-equivalent} if their Taylor polynomials of degree  $k$ at $0$ are equal.
The {\em $k$-jet} of a diffeomorphism $f$ at $0$, $j^k f (0)$, or $j^k f$ for short,
is the equivalence class of $f$.
If we consider only symplectic diffeomorphisms in $\R^{2n}$, 
the set of all the equivalence classes is the {\em space of symplectic $k$-jets}, 
that we denote $J^k_s(n) $.
Observe that $ J^k_s(n)$ is a vector space and that it is also a Lie group, 
with the product defined by
\[
j^k f \cdot j^k g = j^k ( f\circ g ).
\]

We say that a subset $Q \subset J^k_s(n)$ is {\em invariant} if
\begin{equation}
\label{inv-subset-J}
\sigma \cdot Q \cdot \sigma^{-1} = Q ,  \forall   \sigma \in J^k_s(n) .
\end{equation}

Note that  if $M$ has dimension $n+1$, 
$\theta$ is a nontrivial closed orbit of the Hamiltonian flow of $H:T^*M\to\R$
in  the energy level $H^{-1}(c)$, and
$\Sigma \subset  H^{-1}(c)$ is a local transverse section 
at the point $ \theta(0)$,
then $\Sigma$ also has a symplectic structure and the 
Poincar\'e map $P(\theta(0),\Sigma,L):\Sigma \rightarrow \Sigma $ is a symplectic
diffeomorphism.
Therefore, using Darboux coordinates, we can assume that
$j^k P(\theta(0),\Sigma,L))\in J^k_s(n)$.

Given  an invariant subset 
$Q \subset  J^k_s(n)$ and a  closed orbit $\theta$ of a Hamiltonian flow,  
it follows from (\ref{inv-subset-J}) that the property 
``the  $k$-jet of the  Poincar\'e map of $ \theta$ belongs to $Q $'' 
is  independent  of  the section $\Sigma$ and the coordinate system; 
hence, it is well defined.

Using the notation above, we state our perturbation theorem.
Recall that, given a  Tonelli Hamiltonian $H$, 
we denote the perturbed Hamiltonian $H_u=H+ u$.

\begin{thm}
\label{T-jet}
Let $M$ be a closed manifold of dimension $n+1$, $H: T^*M\to\R$,  
$k\geq 1$, and
$Q$ any open invariant subset of $J^k_s(n)$.
Suppose that $\theta(t)=(\gamma(t),p(t))$ is a nontrivial closed orbit 
of the Hamiltonian flow of $H$.
If the $k$-jet of the Poincar\'e map of $\theta$ is in $\overline{Q}$,
then there exists a smooth potential $u: M\to \R $, 
$C^r$-close to zero, $r\geq k+1$, such that
\begin{enumerate}
\item
$\theta(t)$ is also a closed orbit of the Hamiltonian flow of $H_u$  and
\item
the $k$-jet of the Poincar\'e map of $\theta$, 
as a closed orbit of the Hamiltonian flow of $H_u$, is in $Q$.
\end{enumerate}
\end{thm}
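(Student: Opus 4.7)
The plan is to extend the $k=1$ perturbation theorem of Rifford and Ruggiero to higher jets, by a strategy analogous to that of Klingenberg--Takens for geodesic flows. The goal is to show that potential perturbations $u$ which preserve $\theta$ as an orbit can move the $k$-jet of its Poincar\'e map through an open neighborhood of the original jet in $J^k_s(n)$. Since $j^kP_H(\theta)\in\overline{Q}$ and $Q$ is open, any such neighborhood meets $Q$, immediately yielding a $u$ with $j^kP_{H_u}(\theta)\in Q$.

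First I would choose a tubular neighborhood of $\gamma$ in $M$ with coordinates $(t,x)\in(\R/T\Z)\times\R^n$, together with Darboux-type coordinates on $T^*M$ so that $\theta$ is a coordinate axis and the transverse sections $\Sigma_t=\{t=\mathrm{const}\}$ in the energy level of $H$ are symplectic. The requirement that $\theta$ remain an orbit of $H_u$ is equivalent to $X_{u\circ\pi}|_\theta\equiv 0$, i.e., to $u$ being constant on $\gamma$ with $du|_\gamma=0$; after subtracting a constant I may assume $u$ vanishes to first order along $\gamma$. The free Taylor coefficients of such $u$ along $\gamma$ are the normal derivatives of orders $2$ through $k+1$, and these are exactly the data entering the $k$-jet of $X_{H_u}$ along $\theta$.

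Next I would relate these Taylor coefficients to the variation of the $k$-jet of the Poincar\'e map. Iterating the variation of constants formula for the flow of $X_{H_u}$ yields an explicit affine map $u\mapsto j^kP_{H_u}(\theta)$, whose linearization at $u=0$ is a linear operator $\Lambda$ from the admissible $(k+1)$-jets of $u$ along $\gamma$ into the tangent space of $J^k_s(n)$ at $j^kP_H(\theta)$. The linearized flow of $X_H$ along $\theta$ enters $\Lambda$ as a family of symplectic conjugations, and symplecticity of each $\phi_t^{H_u}$ guarantees that $\Lambda$ lands in the correct affine subspace.

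The decisive step is a \emph{covering lemma}: the image of $\Lambda$, combined with the freedom of conjugation by elements of $J^k_s(n)$ coming from changes of transverse section, contains an open subset of $J^k_s(n)$ near $j^kP_H(\theta)$. I would prove it by specializing to bump potentials supported near a single point $\gamma(t_0)$ whose $(k+1)$-jets at $\gamma(t_0)$ exhaust a chosen space of symmetric tensors, and then check, using invertibility of the linearized flow up to time $t_0$, that the resulting perturbations span a full-dimensional set modulo the conjugation action. This rank computation is the main obstacle: because we are restricted to functions on the base, the perturbation enters $X_H$ only through the fiber component, and one must show that the iterated coupling of these fiber perturbations with the original linearized flow provides enough freedom to realize the full orbit of perturbations in $J^k_s(n)$. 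Once the covering lemma is in hand, invariance and openness of $Q$ conclude the proof.
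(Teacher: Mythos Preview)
Your outline follows the right general strategy, but the step you flag as ``the main obstacle'' is a genuine gap, and your plan for closing it does not work as stated.

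The difficulty is this: a potential $u$ on $M$ enters $X_{H_u}$ only through the fiber directions $\partial/\partial y_i$, so at a fixed time $t_0$ the $(k+1)$-jet of the perturbing Hamiltonian on the transverse section is a polynomial in the $x$-variables alone. Pulling back by the linearized Poincar\'e map $\sigma_{t_0}=d_{\theta(0)}\widehat P_{0,t_0}$ replaces this by a polynomial $F\circ\sigma_{t_0}$ in $(x,y)$, and varying $t_0$ gives a family of such polynomials. Your rank computation amounts to showing that this family spans all of $\R_{k+1}[x,y]$. But \emph{invertibility} of $\sigma_{t_0}$ is far from sufficient: if, say, the linearized flow along $\theta$ happens to be the identity (or close to a one-parameter subgroup that does not mix $x$ and $y$ enough), the span can be a proper subspace and $\Lambda$ will not be surjective. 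Your appeal to ``conjugation by changes of transverse section'' does not rescue this: conjugation moves $j^kP$ within its conjugacy class, but since $Q$ is already invariant, moving inside a conjugacy class gains nothing toward hitting $Q$.

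The paper resolves this by a two-step argument you are missing. First it isolates an explicit open dense set $G_k\subset Sp(n)^d$ (the ``$k$-general position'' condition) with the property that if $(\sigma_{t_1},\dots,\sigma_{t_d})\in G_{k+1}$ for some times $t_i$, then the polynomials $x_1^{k+1}\circ\sigma_{t_i}$ form a basis of $\R_{k+1}[x,y]$, and hence the map $u\mapsto j^kR_{u,a}$ is a submersion onto $\ker(\pi_k:J^k_s\to J^{k-1}_s)$. Second, and this is the key point, it uses the Rifford--Ruggiero $k=1$ result \emph{as a preliminary perturbation}: since each $u\mapsto d_{\theta(0)}\widehat P_{u,t_i}$ is already known to be a submersion into $Sp(n)$, and $G_{k+1}$ is open and dense in $Sp(n)^d$, one can first add a small potential to force the linearized flow into $(k+1)$-general position, and only then perform the higher-jet perturbation. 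Your proposal should incorporate this preliminary step; without it the covering lemma can fail.
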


In the particular case $k=1$ the theorem was proved by L. Rifford and R. Ruggiero:
Proposition 4.2 and Lemma 4.3  in \cite{RR:2011}  show that 
an open set of  $ Sp(n)=J^1_s(n)$ is attained by a family of differentials of the 
Poincar\'e maps of the closed orbit $ \theta $ obtained by perturbing the initial Hamiltonian $ H $ by adding small potentials.
See also \cite[Theorem 4.5]{Oliveira:2008}.
Our contribution is the case of higher order jets, $k\geq 2$.

Combining the Kupka-Smale theorem stated above, Theorem \ref{ks}, and 
our  perturbation  theorem, we obtain the genericity result.

\begin{thm}
\label{prop-P-Q}
Let $ M $ be a closed manifold of dimension $n+1$.
For every open, dense, and invariant subset 
$Q\subset J^k_s(n) $ and every $c\in \R$, 
there is a residual subset $\mathcal{O}=\mathcal{O}(Q, c)$ of $C^\infty(M)$ such that, 
if $u\in \mathcal{O}$, 
then the $k$-jet of the  Poincar\'e map of any closed orbit of the Hamiltonian flow of
$H_u= H+ u$ in $H_u^{-1}(c)$ is in $Q $.
\end{thm}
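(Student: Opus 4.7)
The plan is a Baire-category argument in $C^\infty(M)$. Let $\mathcal{KS}(c)$ be the Kupka--Smale residual set from Theorem~\ref{ks}, and for each integer $T\geq 1$ set
\[
\mathcal{O}_T := \{ u \in C^\infty(M) : j^k P(\theta) \in Q \text{ for every closed orbit } \theta \text{ of } X_{H_u} \text{ in } H_u^{-1}(c) \text{ of period at most } T \}.
\]
The candidate residual set is $\mathcal{O} := \mathcal{KS}(c) \cap \bigcap_{T\in\N} \mathcal{O}_T$; once each $\mathcal{O}_T$ is shown to be open and dense the conclusion follows, because every closed orbit has some finite period.

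For openness it is enough to verify it at $u\in\mathcal{O}_T\cap\mathcal{KS}(c)$, since $\mathcal{KS}(c)$ is dense. At such a $u$ the level $H_u^{-1}(c)$ is regular, every closed orbit is non-degenerate, and by compactness only finitely many closed orbits in $H_u^{-1}(c)$ have period at most $T$. The implicit function theorem then guarantees that, under small $C^{k+1}$ perturbations of $u$, each such orbit persists as a non-degenerate closed orbit whose Poincar\'e map and $k$-jet depend smoothly on $u$. Openness of $Q$, applied to this finite family of jets, together with the standard persistence argument ruling out the creation of new closed orbits of period at most $T$ under sufficiently small perturbations, yields openness of $\mathcal{O}_T$ in the $C^\infty$ topology.

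For density, given $u\in C^\infty(M)$ I first approximate by some $u_0\in\mathcal{KS}(c)$ and enumerate the closed orbits of period at most $T$ in $H_{u_0}^{-1}(c)$ as $\theta_1,\ldots,\theta_N$. I then proceed by induction on $i$: orbits whose current $k$-jet already lies in $Q$ require no action, while for the remaining ones, whose jet lies in $\overline{Q}$, Theorem~\ref{T-jet} provides a smooth potential $v_i$, arbitrarily small in $C^{k+1}$, such that $\theta_i$ remains a closed orbit (at the same energy $c$, since the perturbation constructed in Theorem~\ref{T-jet} vanishes to first order along the projection of the orbit) with its new $k$-jet in $Q$. Choosing the $v_i$ successively small produces a potential arbitrarily close to $u$ that lies in $\mathcal{O}_T$.

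The main obstacle I anticipate is the bookkeeping during this iteration: each $v_i$ must be small enough that (a) the orbits $\theta_j$ with $j<i$, slightly deformed by the accumulated perturbation, retain their $k$-jet in $Q$, and (b) no new closed orbit of period at most $T$ appears with $k$-jet outside $Q$. Item (a) is handled by openness of $Q$ together with the smooth dependence coming from the implicit function theorem; item (b) exploits the fact that, in the Kupka--Smale class, the closed orbits of bounded period form a finite, isolated and persistent configuration, so new orbits of period at most $T$ cannot nucleate under sufficiently small perturbations. Combining these continuity statements with the openness of $Q$ permits the perturbation magnitudes to be chosen compatibly at each step, closing the density argument and hence the proof.
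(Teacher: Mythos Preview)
Your argument is essentially the same Baire-category scheme the paper uses: stratify by a period bound, show the corresponding set of potentials is open and dense, and intersect. The density step via iterated application of Theorem~\ref{T-jet} to the finitely many non-degenerate orbits is exactly what the paper has in mind (it states this in one line), and your remark that $Q$ dense forces $\overline{Q}=J^k_s(n)$, so the hypothesis of Theorem~\ref{T-jet} is automatic, is correct.

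One small logical slip: the sentence ``For openness it is enough to verify it at $u\in\mathcal{O}_T\cap\mathcal{KS}(c)$, since $\mathcal{KS}(c)$ is dense'' is not a valid reduction---checking that interior points exist on a dense subset does not make $\mathcal{O}_T$ itself open. The paper avoids this by building non-degeneracy into the definition: it sets $\mathcal{O}(m)\subset\mathcal{G}(m)$, where $\mathcal{G}(m)$ is the open dense set of potentials with regular level and all closed orbits of period $\le m$ non-degenerate, and then proves $\mathcal{O}(m)$ is open in $C^\infty(M)$. If you replace your $\mathcal{O}_T$ by $\mathcal{O}_T\cap\mathcal{G}(T)$ (or, equivalently, argue that $\operatorname{int}(\mathcal{O}_T)$ is dense), your proof goes through and coincides with the paper's.
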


One needs Theorem \ref{prop-P-Q} to study the dynamics of a Ma\~n\'e generic 
Hamiltonian flow that has a  non-hyperbolic  closed orbit in a regular energy level.
Recall that a closed orbit $\theta$ in $H^{-1}(c)$ is {\em $q$-elliptic}
if the derivative of its Poincar\'e map $P$ has exactly $2q$ eigenvalues 
of modulus $1$ which are non-real;
it is {\em quasi-elliptic} if it is $q$-elliptic for some $q>0$. 
If $\theta$ is a $q$-elliptic closed orbit, then the central manifold $W^c$ of the 
Poincar\'e map $P:\Sigma\to \Sigma$ of $\theta$ has dimension $2q$, 
$\omega|_{W^c}$ is non-degenerate, and $F=P|_{W^c}$ is a symplectic map on a
sufficiently small neighborhood of $\theta$.

Let 
$\lambda_1, \dots, \lambda_q, \overline{\lambda_1}, \dots, \overline{\lambda_q}$ 
be the eigenvalues of modulus $1$ of the derivative $d_\theta P$ of the Poincar\'e map of 
a $q$-elliptic orbit $\theta$.
We say that $\theta$ is {\em $4$-elementary} if
$\prod_{i=1}^q \lambda_i^{m_i}\neq 1$ 
whenever the integers $m_i$ satisfy $1\leq\sum_{i=1}^q |m_i|\leq 4$.

\begin{thm}[Birkhoff Normal Form]
\label{bnf}
Let $F: \R^{2q}\to \R^{2q}$ be a symplectic diffeomorphism  such that $0$ is a 
$q$-elliptic $4$-elementary fixed point for $F$.  

Then
there are symplectic coordinates $(x,y)= (x_1, \dots, x_q, y_1, \dots, y_q)$ 
in a neighborhood of $0$ such that
\[
F_k(x,y)=F_k(z)=e^{2\pi i\phi_k(z)} z_k+ R_k(z), \quad k=1, \dots, q,
\]
where
$z= x+ iy$, $\phi_k(z)= a_k+ \sum_{l=1}^q \beta_{kl} |z_l|^2$,
$\lambda_k= e^{2\pi i a_k}$,
and $R_k(z)$ has zero derivatives up to order $3$ at $0$. 
\end{thm}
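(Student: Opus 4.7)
The plan is to perform Birkhoff normalization order by order via symplectic conjugations, up to and including degree three, at which point only resonant monomials survive and they assemble into the claimed twist form.

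First I would reduce the linear part. Since $dF(0)\in \mathrm{Sp}(2q,\R)$ has eigenvalues $\{\lambda_k,\overline{\lambda_k}\}$ on the unit circle, and since the $4$-elementary hypothesis applied with $m_k=2$ forces $\lambda_k^2\neq 1$ so $\lambda_k\neq\overline{\lambda_k}$, a standard symplectic block-diagonalization produces real symplectic coordinates $(x,y)$ in which, setting $z_k=x_k+iy_k$, the linear part acts as $z_k\mapsto\lambda_k z_k$. In these coordinates write
\[
F_k(z,\bar z)=\lambda_k z_k+\sum_{s\geq 2}F_k^{(s)}(z,\bar z),
\]
with $F_k^{(s)}$ homogeneous of degree $s$.

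Next, at $s=2$ and then $s=3$, I would conjugate by the time-one map of a real polynomial Hamiltonian $K_s$ homogeneous of degree $s+1$; this preserves $\omega$ automatically, and since $X_{K_s}$ has degree $s$, it alters $F$ only at degrees $\geq s$, so the two steps do not interfere. A direct computation shows that the induced homological operator acts on a monomial $z^\alpha\bar z^\beta e_k$ by multiplication by the scalar $\lambda^\alpha\overline{\lambda}^\beta-\lambda_k$, where $\lambda^\alpha:=\prod\lambda_j^{\alpha_j}$. Using $\overline{\lambda_j}=\lambda_j^{-1}$, this factor vanishes exactly when
\[
\prod_{j=1}^q \lambda_j^{m_j}=1,\qquad m_j:=\alpha_j-\beta_j-\delta_{jk}.
\]
Since $|m|\leq |\alpha|+|\beta|+1\leq s+1\leq 4$, the $4$-elementary hypothesis excludes every resonance except $m=0$. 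A parity check rules this out for $s=2$, and for $s=3$ it forces $\alpha=e_k+e_l$, $\beta=e_l$, corresponding to the monomial $z_k|z_l|^2$. Solving the homological equation coefficient by coefficient therefore yields $K_2$, $K_3$ that eliminate every monomial of degree $2$ and every non-diagonal monomial of degree $3$.

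After these two symplectic conjugations the map has the form
\[
F_k(z,\bar z)=\lambda_k z_k+\lambda_k\cdot 2\pi i\,z_k\sum_{l=1}^q\beta_{kl}|z_l|^2+R_k(z,\bar z),
\]
where $R_k$ vanishes to order $3$ at $0$; the symplecticity of $F$ forces each $\beta_{kl}$ to be real, reflecting the fact that each $|z_k|^2$ is a formal integral of motion up to this order. Since the displayed polynomial is precisely the degree-$\leq 3$ Taylor expansion of $e^{2\pi i\phi_k(z)}z_k$ with $\phi_k(z)=a_k+\sum_l\beta_{kl}|z_l|^2$ and $\lambda_k=e^{2\pi i a_k}$, absorbing the exponential remainder into $R_k$ produces the stated normal form. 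The main obstacle is not the identification of the resonant set but the symplectic bookkeeping: one must check that the generating Hamiltonians $K_s$ can indeed be chosen to annihilate every non-resonant monomial, which reduces to the non-vanishing of the denominators $\lambda^\alpha\overline{\lambda}^\beta-\lambda_k$; this is exactly the content of the $4$-elementary condition.
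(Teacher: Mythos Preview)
The paper does not give its own proof of this theorem; it simply cites Klingenberg's \emph{Lectures on closed geodesics}, p.~101. Your sketch is the standard Birkhoff normalization argument --- linear symplectic diagonalization, followed by successive elimination of non-resonant monomials via Hamiltonian conjugations, with the $4$-elementary condition guaranteeing that the relevant small divisors $\lambda^\alpha\overline{\lambda}^\beta-\lambda_k$ never vanish for $|\alpha|+|\beta|\leq 3$ --- and is essentially what one finds in the cited reference. The identification of the resonant set at degree three as exactly the monomials $z_k|z_l|^2$, the parity obstruction at degree two, and the final repackaging as the cubic Taylor expansion of $e^{2\pi i\phi_k(z)}z_k$ are all correct. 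There is nothing to compare against in the paper itself.
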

For a proof, see \cite[p.~101]{Klingenberg:1978}.

Then, we say that a $q$-elliptic closed orbit $\theta$ is {\em weakly monotonous}
if the coefficients $\beta_{kl}$ of $F=P|_{W^c}$ satisfy
$\det[\beta_{kl}]\neq 0$.

Let $Q\subset J^3_s(n)$ be the set of $3$-jets of $C^\infty$ symplectic diffeomorphisms
$T$ of $\R^{2n}$ such that the origin is 
\begin{enumerate}
\item
a hyperbolic fixed point or
\item
a weakly monotonous $4$-elementary $q$-elliptic fixed point, for some $q>0$.
\end{enumerate}

Observe that $Q$ is open, dense and invariant; recall the definition of invariant, 
equation (\ref{inv-subset-J}).
Let $c\in\R$ define the energy level.
By applying Theorem \ref{prop-P-Q}, to $Q$ and $c$, we get a residual subset $\mathcal{O}$.
Now, if $u\in \mathcal{O}$ and  $\theta$ is a non-hyperbolic orbit of $H_u$ 
in the energy level $H_u^{-1}(c)$, 
then the $3$-jet of the Poincar\'e map of $\theta$ is in $Q$.
Hence, $\theta$ is a weakly monotonous $q$-elliptic orbit, for some $q>1$.
This proves the following result.

\begin{cor}
\label{twist-n}
Let $M$ be a closed manifold, $H: T^*M\to \R$ a Tonelli Hamiltonian and $c\in\R$.
There is a residual subset $\mathcal{O}= \mathcal{O}(c)$ of $C^\infty(M)$ such that,
for every $u\in\mathcal{O}$,
any closed orbit of $H_u$ in $H_u^{-1}(c)$ is either hyperbolic or weakly monotonous 
quasi-elliptic.
\end{cor}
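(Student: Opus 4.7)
The plan is to deduce the corollary directly from Theorem \ref{prop-P-Q} applied to the subset $Q\subset J^3_s(n)$ already singled out in the paragraphs preceding the statement, namely the set of $3$-jets of smooth symplectic diffeomorphisms $T$ of $\R^{2n}$ for which the origin is either a hyperbolic fixed point, or a weakly monotonous $4$-elementary $q$-elliptic fixed point for some $q>0$. Once this $Q$ is shown to be open, dense, and invariant, Theorem \ref{prop-P-Q} provides a residual subset $\mathcal{O}=\mathcal{O}(Q,c)\subset C^\infty(M)$ such that, for every $u\in\mathcal{O}$, the $3$-jet of the Poincar\'e map of any closed orbit of $H_u$ in $H_u^{-1}(c)$ lies in $Q$; reading off the definition of $Q$, such an orbit is either hyperbolic or weakly monotonous quasi-elliptic, which is exactly the conclusion.

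The bulk of the work is therefore to verify the three defining properties of $Q$. Invariance under conjugation in $J^3_s(n)$ is essentially formal: hyperbolicity, the $q$-elliptic condition, and the $4$-elementary condition are all spectral invariants of the linear part, while the Birkhoff normal form furnished by Theorem \ref{bnf} is canonical enough that the non-vanishing of $\det[\beta_{kl}]$ is an invariant of the formal symplectic conjugacy class of the map and hence of the $3$-jet. Density is obtained by a small perturbation argument: any $3$-jet may be approximated by one whose linear part is hyperbolic (a generic condition among symplectic linear maps), and hyperbolic $3$-jets are in $Q$. Openness breaks into two regimes. Hyperbolicity is plainly open on $1$-jets, hence on $3$-jets. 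In the $q$-elliptic regime the $4$-elementary assumption ensures that the $2q$ unit-modulus eigenvalues are simple and distinct from $\pm 1, \pm i$, so by the standard symplectic eigenvalue persistence (an eigenvalue on the unit circle can only leave it by colliding with its complex conjugate) they remain on the unit circle under small perturbations; the $4$-elementary inequalities are also open.

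The one delicate step is openness of the weak monotonicity condition $\det[\beta_{kl}]\neq 0$. The coefficients $\beta_{kl}$ are attached only after choosing symplectic normalizing coordinates as in Theorem \ref{bnf}, so to conclude openness I need to argue that, on the open set of $4$-elementary $q$-elliptic $3$-jets, the $\beta_{kl}$ may be chosen to depend continuously on the $3$-jet. The standard construction of the Birkhoff normal form is algebraic: one solves explicit linear equations at each order whose coefficients are polynomial in the Taylor coefficients of the map and whose small divisors are of the form $\prod_k\lambda_k^{m_k}-1$ with $1\le\sum|m_k|\le 4$, all of which are non-zero in the $4$-elementary regime. This yields the required continuity, and then $\det[\beta_{kl}]\neq 0$ is an open condition on the $3$-jet.

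I expect this last verification to be the main technical obstacle, since it is where the higher-order part of the jet genuinely enters; the rest of the argument reduces to routine eigenvalue perturbation theory and an invocation of Theorem \ref{prop-P-Q}.
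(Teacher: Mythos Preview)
Your overall strategy matches the paper's exactly: define the set $Q\subset J^3_s(n)$ as in the text, check that it is open, dense, and invariant, and then invoke Theorem \ref{prop-P-Q}. The paper simply asserts these three properties of $Q$ without proof, so your proposal is a more detailed version of the same argument.

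However, your argument for \emph{density} of $Q$ contains a genuine error. You write that ``any $3$-jet may be approximated by one whose linear part is hyperbolic (a generic condition among symplectic linear maps).'' This is false: hyperbolicity is open but \emph{not} dense in $Sp(n)$. Already for $n=1$, the elliptic matrices in $SL(2,\R)$ (those with trace in $(-2,2)$) form a nonempty open set containing no hyperbolic elements; more generally, a simple pair of non-real eigenvalues on the unit circle is stable under symplectic perturbation, precisely by the eigenvalue-collision mechanism you yourself invoke when arguing openness. So a $3$-jet whose linear part is genuinely elliptic cannot be approximated by hyperbolic ones.

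The correct density argument must therefore go through the second alternative in the definition of $Q$. Given an arbitrary $3$-jet, first perturb the linear part inside $Sp(n)$ so that all eigenvalues are simple and none is a root of unity of order $\le 4$; this is a generic condition and yields a linear part that is either hyperbolic or $4$-elementary $q$-elliptic for some $q>0$. In the latter case, the Birkhoff coefficients $\beta_{kl}$ are now well defined (and, as you correctly note, depend polynomially on the Taylor coefficients of the $3$-jet with nonvanishing small divisors), so a further perturbation of the degree-$3$ terms makes $\det[\beta_{kl}]\neq 0$. This two-step perturbation shows $Q$ is dense. Once you replace your density paragraph with this argument, the rest of your proposal stands.
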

 
Observe that what we called here weakly monotonous is a generalization of the twist condition
of surface diffeomorphisms.
This property has important dynamical consequences such as 
the existence of a nontrivial hyperbolic set 
and then, infinitely many periodic orbits and positive topological entropy.
We refer to \cite{Contreras:2010} for the details. 
In this paper G. Contreras treats the case of geodesic flows but 
the arguments apply to our context as well.

\section{The $k$-general position}
\label{k-general maps}

In this preliminary section, we introduce the idea of $ k$-general position through 
an open and dense set $ G_k $.
Later we will use this concept to perform the perturbation of the jets. 

We identify  $J^1_s(n) $ with the classic Lie group $Sp(n)$ 
of symplectic maps in $\R^{2n}$ which is also identified with the group of 
$2n\times 2n$ symplectic matrices, i.e., 
with the matrices that satisfy $\sigma^T  J  \sigma = J$,
where
\[
J = 
\begin{bmatrix}
 0 & I \\ 
-I & 0  
\end{bmatrix}
\] 
and $I=I_n$ is the $n\times n$ identity matrix.

For each $k \in \N $,  we denote by  
$\R_k[x, y]$  the set of all real homogeneous polynomials of degree $k$ 
in the $2n$ variables $x=(x_1,\dots, x_n)$, $y=(y_1,\dots, y_n)$. 
This is a real vector space of dimension $d=d(2n,k)= \binom{2n - 1 + k}{k}$.

We fix the polynomial 
$F(x_1,\dots,x_n,y_1,\dots,y_n) = x_1^k$ and we define
\[
G_k =\left\{ 
(\sigma_1,\dots,\sigma_d) \in Sp(n)^{d} : 
\left\{ F\circ\sigma_1, \dots , F\circ\sigma_d \right\} 
\mbox{ is a basis  of } \R_k[x, y] \right\}.
\]

\begin{prop}
\label{P-kgen}
For each  $ k \in \N $, the subset  $ G_k $ is open and dense in  $Sp(n)^d$.
\end{prop}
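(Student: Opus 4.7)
The plan is to separate openness from density. For openness, fix any basis $\{B_1,\dots,B_d\}$ of $\R_k[x,y]$ and write $F\circ\sigma_i=\sum_j c_{ij}(\sigma_i)\,B_j$; the coefficients $c_{ij}$ are polynomials in the entries of $\sigma_i$, so
\[
\Delta(\sigma_1,\dots,\sigma_d):=\det[c_{ij}(\sigma_i)]
\]
is a polynomial function on $Sp(n)^d$. Since $G_k=\{\Delta\neq 0\}$, openness is immediate.

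For density, I would reduce to showing $G_k$ is nonempty. The group $Sp(n)$ is a connected real Lie group, so $Sp(n)^d$ is a connected real-analytic manifold; on such a manifold, the zero set of any nonzero real-analytic function has empty interior. Hence, once we exhibit a single tuple in $G_k$, the complement $\{\Delta=0\}$ is a proper real-analytic subset of $Sp(n)^d$, and density of $G_k$ follows.

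Nonemptiness would in turn follow from the stronger claim that the subspace
\[
W:=\mathrm{span}\{F\circ\sigma:\sigma\in Sp(n)\}\subset \R_k[x,y]
\]
is everything. I would prove this in three steps. First, a direct computation: since $F(x,y)=x_1^k$ and $\sigma$ is linear, $F\circ\sigma(x,y)=\ell_\sigma(x,y)^k$, where $\ell_\sigma$ is the linear form on $\R^{2n}$ whose coefficients form the first row of $\sigma$. Second, by a symplectic Gram--Schmidt argument, any nonzero vector $v\in\R^{2n}$ occurs as the first row of some $\sigma\in Sp(n)$ (complete $v$ to a symplectic basis and read off the matrix whose rows are those basis vectors); hence $W$ contains $\ell^k$ for every nonzero linear form $\ell$. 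Third, the classical polarization identity shows that $\R_k[x,y]=S^k((\R^{2n})^*)$ is spanned by $k$-th powers of linear forms. Combining these three facts yields $W=\R_k[x,y]$, and extracting any $d$ elements of $\{F\circ\sigma\}$ that form a basis produces a tuple in $G_k$.

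The step I expect to be least automatic is the second one, establishing that any nonzero vector is the first row of some symplectic matrix. This is standard and equivalent to the transitivity of $Sp(n)$ on $\R^{2n}\setminus\{0\}$, but it is the main place where the symplectic structure enters the argument; the rest is essentially multilinear algebra together with the dimension-theoretic miracle that $\dim\R_k[x,y]=d$. An alternative route to $W=\R_k[x,y]$ would be to invoke directly the irreducibility of the symmetric power $S^k((\R^{2n})^*)$ as an $Sp(n)$-representation, which forces any nonzero invariant subspace to be the whole space.
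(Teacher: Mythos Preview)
Your proof is correct, and the overall architecture (openness via the nonvanishing of a polynomial determinant, density via nonemptiness plus the nowhere-density of a proper analytic subset of a connected manifold) matches the paper's. The difference lies in how nonemptiness of $G_k$ is established.

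The paper builds explicit symplectic matrices of block upper-triangular shape so that $F\circ\sigma=(x_1+a_2x_2+\cdots+a_{2n}y_n)^k$ for freely chosen parameters $a_2,\dots,a_{2n}$, and then specializes $a_i=t^{p_i}$ with carefully chosen exponents $p_i$ so that a Vandermonde argument yields $d$ linearly independent polynomials. Your route is more conceptual: you observe that $F\circ\sigma=\ell_\sigma^k$ with $\ell_\sigma$ the first row of $\sigma$, invoke the transitivity of $Sp(n)$ on $\R^{2n}\setminus\{0\}$ to realize every nonzero linear form as some $\ell_\sigma$, and then use polarization to conclude that $k$-th powers of linear forms span $\R_k[x,y]$. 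Both arguments ultimately hinge on the same phenomenon (powers of linear forms span), but the paper's version is constructive and elementary, while yours trades explicitness for brevity and avoids the combinatorics of the Vandermonde step. Your alternative via irreducibility of $S^k((\R^{2n})^*)$ as an $Sp(n)$-module is also valid, since the invariant form is skew and hence gives no invariant in $S^2$; this is the slickest route but imports more representation theory than either of the other two arguments needs.
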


\begin{proof}
Let $F_1, \dots, F_d$ be a basis of the vector space $\R_k[x, y]$ and define
the matrix $A(\sigma_1, \dots, \sigma_d)=[a_{ij}]$ by
the equation $F\circ\sigma_j= \sum_i a_{ij} F_i$.
Then $ G_k $ is the  complement in $Sp(n)^d $ of the set 
\[
\left\{ 
(\sigma_1, \dots,\sigma_d) \in Sp(n)^d : 
\det A(\sigma_1, \dots, \sigma_d)= 0 \right\}.
\] 
This shows that $ G_k $ is the complement of an algebraic
subset.

To verify that $G_k$ is non-empty, let us consider the following example.
Define
\[
\sigma
= \begin{bmatrix}
A &   B  \\ 
0 & ( A^{-1} )^T 
\end{bmatrix}
\]
where 
\[
A
= \begin{bmatrix}  
 1  &|& a_2  \dots a_n\\
 --&|&------\\
 0  &|& \\
\vdots    &|&  I_{n-1} \\
0   &|&
\end{bmatrix}, \quad
B
=\begin{bmatrix}
a_{n+1} & | & a_{n+2} \dots a_{2n}\\
 --        & | & ------\\
b_2       & | & \\
\vdots  & | & I_{n-1}\\
b_{n}     & | &
\end{bmatrix},
\]
and $b_i= a_{n+i} - a_i $, for $ i= 2, \dots, n$.
It is easy to check that the matrix $A^{-1} B$ is symmetric; 
hence, $\sigma$ is symplectic.

Then the composition $F \circ \sigma$ defines the homogeneous polynomials
\[
( x_1 + a_2 x_2+\dots+a_{n} x_n+ a_{n+1} y_1+\dots+ a_{2n} y_n )^k.
\]
Let us show that there exist values $a_{ij} $ such that 
the corresponding set of polynomials contain a basis of 
$\R_k[x, y]$.  
To simplify the notation we write the argument using arbitrary variables 
$x_1,\dots, x_m$.
First, observe that
\[
( x_1 + a_2 x_2+ \cdots+ a_m x_m)^k=
\sum_{\alpha_1+\cdots +\alpha_m= k} 
a_2^{\alpha_2}\cdots a_m^{\alpha_m}\
c_{\alpha_1\cdots \alpha_m} x_1^{\alpha_1}\cdots x_m^{\alpha_m},
\]
where all the coefficients $c_{\alpha_1\cdots \alpha_m}$ are different from zero 
so that the monomials 
$c_{\alpha_1\cdots \alpha_m} x_1^{\alpha_1}\cdots x_m^{\alpha_m}$ 
are a basis of the space of homogeneous polynomials of degree $k$.
Then, we let $a_i= t^{p_i}$ and need to choose the powers $p_i$ to ensure that 
after this replacement we have different powers of $t$ in all the 
$a_2^{\alpha_2}\cdots a_m^{\alpha_m}$.
One can check that a possible choice is $p_i= \sum_{j=0}^{i-2} k^j$, $i=2, \dots, 
m$.
Now that the scalars are all powers of $t$, the result follows using the result about
Vandermonde matrices.
\end{proof}

\section{Proof of Theorem~\ref{T-jet}}
\label{k-jet-perturbation}

Let $ \theta=\theta(t)=(\gamma(t), p(t))$  be a  closed orbit 
of the Hamiltonian flow  of the initial Hamiltonian $H_0=H: T^* M \to \R$ and 
let $ T >0 $ be its minimal period.
Recall that, by the Kupka-Smale Theorem, we can assume  that $ \theta $ is 
a non-degenerate closed orbit and that the  energy level $H_0^{-1}(c) $ that contains $ \theta $ 
is  regular.

Since the number of self-intersection points is finite, we can choose  
a time $ a\in ( 0,T ] $, such that the arc $ \gamma( [0,a] ) $ does
not contain self-intersection points of the  curve $ \gamma $ and
a tubular neighborhood $ W \subset M $ of  $ \gamma( [0,a] ) $,
sufficiently small,  such that  $ W \cap \gamma([0,T]) = \gamma( [0,a] )$.

Then, we  choose local coordinates 
$(x_0,x): W \to \R \times \R^n $, $ x=( x_1,\dots, x_n)$, such that 
\[
x_0(\gamma(t))=t \ \ \mbox{ and }\ \ \  x(\gamma(t))= ( 0,\dots, 0),
\] 
for $t\in [0,a]$. 
Then, if $p\in T^*_{(x_0,x)}M$, we define $y_i$ by
$p=\sum y_i dx_i$ and we have a natural chart 
\[
(x_0, x, y_0, y)= (x_0, x_1,\dots, x_n, y_0, y_1, \dots, y_n)
\] 
of $\pi^{-1}(W) \subset T^*M$.

In these coordinates we have that the $2$-form $\omega$ that defines
the symplectic structure in $T^* M$ can be written
$\omega = \sum_{i=0}^n dx_i \wedge dy_i$
and that the Hamiltonian vector field $X_H$ of a Hamiltonian $H$ is
\begin{equation}
\label{X-loc}
X_{H}= \sum_{i=0}^n 
\left( \frac{\partial H}{\partial y_i}\frac{\partial}{\partial x_i } -
 \frac{\partial H}{\partial x_i} \frac{\partial}{\partial y_i } \right).
\end{equation}

Now, we  define our  perturbation space.

Given $ k \in \N$, 
let  $ \mathcal{F}^k =\mathcal{F}(W,\gamma,H_0,a,k)$  be  the  subset of smooth functions   
$ u: M\rightarrow \R  $  supported in $ W$  such that,  in the local coordinates $(x_0,x)$, 
have the form
\[ 
u(x_0, x_1, \dots, x_n) =  \delta(x_0)\beta(x_1, \dots, x_n),
\]
where $\delta:\R\to\R $ and $\beta: \R^n\to\R$ are smooth functions satisfying 
\begin{itemize}
\item
$\supp(\delta)\subset ( 0,a)$ 
\item
$\supp(\beta) \subset B_\epsilon(0)$, 
with  $\epsilon$ sufficiently small, and
\item
$j^{k+1} \beta(0)$ is homogeneous of degree $k+1$.
\end{itemize}
We will  consider perturbations $H_u= H+ u$ of the initial Hamiltonian $ H_0=H$, 
with  $ u \in \m{F}^k \subset C^\infty (M)$.

Then $ X_{H_u} = X_{H_0} + X_u $ and, by (\ref{X-loc}), we have
\begin{equation}
\label{X-u-loc}
X_u (x_0, x, y_0, y)
= -\delta^\prime(x_0)\beta(x)\frac{\partial}{\partial y_0}
    -\sum_{i=1}^n
      \delta(x_0)\frac{\partial\beta}{\partial x_i}(x)\frac{\partial}{\partial y_i}.
\end{equation}
Note that, if $ u \in \mathcal{F}^k$, then the  closed orbit $ \theta $ is  still 
a closed orbit of the Hamiltonian flow of $H_u $ 

Using the coordinates $ (x_0, x, y_0, y)$  in $\pi^{-1}(W) \subset T^*M$, 
we set $\Lambda(t)=\{ x_0=t\}$.
This defines a family of local hypersurfaces in $T^*M$ that are transverse to 
$\theta(t) $,  for $ t\in [0,a]$.  
It follows from the definition of the set  $\mathcal{F}^k$ that
the vector field $X_u$ satisfy
\begin{itemize}
\item
$j^{k-1} X_u (\theta(t))=0$, for all $t \in [0,a] $,
\item
neither $\theta(0)$ nor $\theta(a)$ is in $\supp (X_u) $, and
\item
$ X_u|_{\Lambda(t)} $ is tangent to $\Lambda(t) $, for all $t \in [0,a] $.
\end{itemize}
Also, for each $t \in [0,a]$, we denote
\[
R_{u,t}= \widehat{P}_{0,t}^{-1} \circ \widehat{P}_{u,t},
\]
where $\widehat{P}_{0,t}: \Lambda(0) \rightarrow \Lambda(t)$
and  $\widehat{P}_{u,t}: \Lambda(0) \rightarrow \Lambda(t) $ are
the Poincar\'e maps in an open neighborhood of  $\theta(0)\in \Lambda(0)$
with respect to $H_0$ and $H_u$ respectively.

The following proposition holds for abstract vector fields satisfying  
the three  conditions above.
For a proof, see \cite[section 2]{Klingenberg:1972}.

\begin{prop}
\label{O-G}
The $k$-jet of $R_{u,a}$ at $\theta(0) $
is equal to the $k$-jet  of the time-$a$ map of the flow of
the non autonomous vector field 
$ \widehat{P}_{0,t}^* ( \left. X_u\right|_{\Lambda(t)} ) $ at $\theta(0)$.
\end{prop}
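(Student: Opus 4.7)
The plan is to write $R_{u,t}$ as the solution of a non autonomous ODE on $\Lambda(0)$ and to compare this ODE, at the level of $k$-jets at $\theta(0)$, with the equation $\dot\Phi_t=Y_t\circ\Phi_t$ defining the flow $\Phi_t$ of $Y_t:=\widehat{P}_{0,t}^*(X_u|_{\Lambda(t)})$. Note that $R_{u,0}=\Phi_0=\mathrm{id}$ and that $\theta(0)$ is a common fixed point of both families, since $X_u$ vanishes in a neighborhood of $\theta(0)$ and along the whole curve $\theta$.

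To derive the ODE for $R_{u,t}$, I would differentiate the identity $\widehat{P}_{0,t}(R_{u,t}(z))=\widehat{P}_{u,t}(z)$ in $t$ at fixed $z$. The tangency condition on $X_u$ is crucial here: since $X_u$ has no $\partial/\partial x_0$ component in our coordinates, $X_{H_u}\cdot x_0=X_{H_0}\cdot x_0$, and so the transit-time derivative $\partial_t\tau$ takes the same form for both Poincar\'e maps when evaluated at the common endpoint $\widehat{P}_{u,t}(z)=\widehat{P}_{0,t}(R_{u,t}(z))$. After the $X_{H_0}$ contributions cancel on both sides, one obtains
\[
\dot R_{u,t}(z) \;=\; c(t,R_{u,t}(z))\,Y_t(R_{u,t}(z)),
\]
where $c(t,w):=1/(X_{H_0}\cdot x_0)(\widehat{P}_{0,t}(w))$. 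The coordinate choice $x_0(\gamma(t))=t$ forces $(X_{H_0}\cdot x_0)(\theta(t))=1$, hence $c(t,\theta(0))=1$.

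The hypothesis $j^{k-1}X_u(\theta(t))=0$ implies that $Y_t$ vanishes to order $k$ at $\theta(0)$, so the error $E_t(w):=(c(t,w)-1)Y_t(w)$ vanishes to order $k+1$ there. Rewriting the ODE as $\dot R_{u,t}=Y_t\circ R_{u,t}+E_t\circ R_{u,t}$ and expanding both $R_{u,t}$ and $\Phi_t$ as Taylor series in $h=\cdot-\theta(0)$, an induction on the order $j$ of the Taylor coefficient shows that the coefficients of $R_{u,t}$ and $\Phi_t$ coincide for all $j\leq k$ and all $t\in[0,a]$: at each such order, the ODE for the $j$-th coefficient is determined solely by coefficients up to order $j$ of $Y_t$, $c$, and the map itself, and the $E_t$-term contributes only at orders $\geq k+1$. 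Setting $t=a$ gives the assertion.

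The main subtlety lies in handling the scalar factor $c$: it is nonconstant in $w$ and a priori contributes to the $k$-jet of $\dot R_{u,t}$, but the normalization $c(t,\theta(0))=1$ together with the high-order vanishing of $Y_t$ at $\theta(0)$ confines its effect to orders beyond $k$, so that the $k$-jet of the ODE for $R_{u,t}$ matches that of the ODE for $\Phi_t$.
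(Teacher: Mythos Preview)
The paper does not actually prove this proposition; it simply refers the reader to \cite[Section~2]{Klingenberg:1972}. Your argument is correct and is essentially the classical one carried out in that reference, adapted to the present setting: derive the non autonomous ODE for $R_{u,t}$ by differentiating $\widehat P_{0,t}\circ R_{u,t}=\widehat P_{u,t}$, use the tangency of $X_u$ to $\Lambda(t)$ so that the two transit-time derivatives agree and only the $X_u$ term survives, and then observe that the resulting right-hand side $c\,Y_t$ and the vector field $Y_t$ have the same $k$-jet at $\theta(0)$ because $c(t,\theta(0))=1$ while $j^{k-1}Y_t(\theta(0))=0$. The only point worth making explicit in your write-up is that the induced ODEs for $j^kR_{u,t}$ and $j^k\Phi_t$ in the finite-dimensional jet group $J^k$ literally coincide (since $j^k(Z\circ f)$ depends only on $j^kZ$ and $j^kf$ when $Z$ and $f$ both fix $\theta(0)$), so uniqueness for ODEs gives $j^kR_{u,a}=j^k\Phi_a$ directly, without needing the order-by-order induction you sketch.
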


We are interested in the $k$-jet of the map $R_{u,a}$  
restricted  to the energy level  $ H_0^{-1}(c)$. 
First, note that  the energy levels $ H_0^{-1}(c) $ and $ H_u^{-1}(c) $  are  
$k$-tangent   along  the closed orbit $ \theta $, for all  $ u \in  \m{F}^k$. 
 
For each $ u \in \m{F}^k$,  let $ \Sigma_u(t) \subset T^*M $ be the submanifold given by
$\Sigma_u(t)=\Lambda(t) \cap H_u^{-1}(c)$.
Then $\omega $ induces a symplectic structure on $ \Sigma(t) $ and the restriction  
$\widehat{P}_{0,t}|_{\Sigma_0(0)} : \Sigma_0(0)\to\Sigma_0(t)$ 
is a symplectic map for all $ t \in [0,a]$. 
Since neither $\theta(0)$ nor $\theta(a)$ is in $\supp (X_u) $, the restriction  
$\widehat{P}_{u,a}|_{\Sigma_0(0)} : \Sigma_0(0)\to\Sigma_0(a) $ 
is a symplectic map too.

Observe that 
$ \frac{\partial H_0}{\partial y_0} (\theta(t)) \equiv 1$.
Then we can parameterize   $\Sigma_0(t) $ in terms of the coordinates  
$(x,y)$, this is,  for each $ t \in [0,a]$ 
there is an open set $V_t \subset\R^{2n}$ and a  function 
$\alpha_t:V_t \to\R $, such that
\[
\Sigma_0(t)= \{ (t, x, \alpha_t(x,y), y)  \in\Lambda(t) : (x, y) \in V_t \}.
\] 
Since 
$T\Sigma_0(t)\subset \ker(dx_0)$, 
the symplectic structure induced by  $ \omega$ in $ \Sigma_0(t) $ 
is given by
$\omega|_{\Sigma_0(t)} = dx \wedge dy$.
For each $ u \in\mathcal F^k $ and $ t \in [0,a]$, we consider the Hamiltonian
function $ K_{u,t}:\Sigma_0(t) \rightarrow \R $ given by 
\[
K_{u,t} 
= u|_{\Sigma_0(t)}
= \delta(t)\beta(x_1, \dots, x_n)
\] 
and we denote by $Y_{u,t} $
its Hamiltonian vector field.

Then
\begin{equation}
\label{conta-jet}
 j^{k+1} K_{u,t} (\theta(t)) = \delta(t) x_1^{k+1}
\end{equation}
and this defines a  family,  parameterized by  $t$, of multiples of
the polynomial  
\[
F(x_1,\dots,x_n,y_1,\dots, y_n)= x_1^{k+1}.
\]

\begin{rem}
\label{Sigma-inv}
If $(t, x) \in\supp(u)$, the submanifold $ \Sigma_0(0) $ is not invariant by the map $R_{u,t}$.
But, 
the $k$-jet of the  component of  $X_u $ in the direction of $\frac{\partial}{\partial y_0} $ 
at $\theta(t) $ is zero, see  (\ref{X-u-loc}), and
the directions $\frac{\partial}{\partial y_i}$, for all  $i=1, \dots, n$, are tangent to
$ \Sigma_0(t) $ along $ \theta(t)$, because  
$d_{\theta(t)}H(\frac{\partial}{\partial y_i})
=\omega_{\theta(t)}( X_{H_0},\frac{\partial}{\partial y_i} ) \equiv 0$.
Then  the vector fields  $ Y_{u,t} $   and $X_u|_{\Sigma_0(t)} $ in
$\Sigma_0(t)$ have the same $k$-jet along  $ \theta(t) $.
Moreover,
since  $ \widehat{P}_{0,t}\left(\Sigma_0(0)\right)=\Sigma_0(t) $, we have that the
non autonomous vector field $ \widehat{P}^*_{0,t} ( X_u|_{\Sigma_0(t)} )$  is
$k$-tangent to $ \Sigma_0(0) \subset \Lambda(0)$.
Therefore
the submanifolds $\Sigma_0(0) $ and $R_{u,t}( \Sigma_0(0) )$ have a
tangency  of order $k$ at $\theta(0)$. 
Then, to study the $k$-jet
of $R_{u,t}$ at $\theta(0) $ we can assume that $R_{u,t}$
leaves $ \Sigma_0(0) $ invariant for all $ t \in [0,a]$.
\end{rem}
 
\begin{lem}
\label{J-S_t-sigma(0)}
The $k$-jet at $\theta(0)$ of $R_{u,t}|_{\Sigma_0(0)}$ is equal to 
the $k$-jet at $ \theta(0)$ of the Hamiltonian flow at  time $t$ that corresponds to 
the non autonomous Hamiltonian function  
$[\delta(t) F \circ ( \widehat{P}_{0,t}|_{\Sigma_0(0)} ) ]$ in $ \Sigma_0(0)$.
\end{lem}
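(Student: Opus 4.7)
The plan is to chain together three reductions: Proposition \ref{O-G}, Remark \ref{Sigma-inv}, and the naturality of Hamiltonian vector fields under symplectomorphisms.

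First, I would apply (a version of) Proposition \ref{O-G} on the symplectic slices: thanks to Remark \ref{Sigma-inv} one may treat $\Sigma_0(0)$ as invariant under $R_{u,s}$ up to order $k$, and running the argument of Proposition \ref{O-G} inside $\Sigma_0(0)$ rather than inside $\Lambda(0)$ identifies the $k$-jet at $\theta(0)$ of $R_{u,t}|_{\Sigma_0(0)}$ with the $k$-jet at $\theta(0)$ of the time-$t$ map of the non-autonomous vector field
\[
Z_s := \bigl(\widehat{P}_{0,s}|_{\Sigma_0(0)}\bigr)^{\!*}\bigl(X_u|_{\Sigma_0(s)}\bigr), \quad s\in[0,t],
\]
on $\Sigma_0(0)$.

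Next, I would use Remark \ref{Sigma-inv} a second time to replace the ambient restriction $X_u|_{\Sigma_0(s)}$ by the intrinsic Hamiltonian vector field $Y_{u,s}$ of $K_{u,s}=\delta(s)\beta(x)$ on $\Sigma_0(s)$: the two vector fields have the same $k$-jet along $\theta(s)$, and since pullback by a diffeomorphism preserves $k$-jets at corresponding points, $Z_s$ and $(\widehat{P}_{0,s}|_{\Sigma_0(0)})^{\!*}Y_{u,s}$ have the same $k$-jet at $\theta(0)$. Now I would invoke the fact that $\widehat{P}_{0,s}|_{\Sigma_0(0)}\colon\Sigma_0(0)\to\Sigma_0(s)$ is symplectic: the pullback of a Hamiltonian vector field by a symplectomorphism is the Hamiltonian vector field of the pulled-back function. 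Hence $(\widehat{P}_{0,s}|_{\Sigma_0(0)})^{\!*}Y_{u,s}$ is the Hamiltonian vector field on $\Sigma_0(0)$ associated to $\delta(s)\bigl(\beta\circ\widehat{P}_{0,s}|_{\Sigma_0(0)}\bigr)$.

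Finally, by (\ref{conta-jet}) the $(k+1)$-jet of $\beta$ at the origin equals $F$, so the $(k+1)$-jets of $\beta\circ\widehat{P}_{0,s}|_{\Sigma_0(0)}$ and of $F\circ\widehat{P}_{0,s}|_{\Sigma_0(0)}$ at $\theta(0)$ coincide; this forces the $k$-jets of their Hamiltonian vector fields to coincide at $\theta(0)$, and consequently the $k$-jets of the resulting time-$t$ flows as well. Chaining the three identifications gives the lemma.

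The main obstacle I anticipate is the very first step: one must verify that the proof of Proposition \ref{O-G} transports to the symplectic slices, which ultimately rests on the standard but delicate fact that the $k$-jet at $\theta(0)$ of the time-$t$ map of a non-autonomous vector field is determined by the $k$-jets of the field along the base orbit — a consequence of the variational equations that must be tracked through both the restriction to $\Sigma_0(s)$ and the pullback by $\widehat{P}_{0,s}|_{\Sigma_0(0)}$. The remaining steps are purely functorial and should go through without incident.
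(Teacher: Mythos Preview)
Your proposal is correct and follows essentially the same route as the paper: combine Proposition~\ref{O-G} with Remark~\ref{Sigma-inv} to identify the $k$-jet of $R_{u,t}|_{\Sigma_0(0)}$ with that of the time-$t$ flow of $\widehat{P}_{0,t}^*(Y_{u,t})$, then use that $\widehat{P}_{0,t}|_{\Sigma_0(0)}$ is symplectic to rewrite this pullback as the Hamiltonian vector field of $K_{u,t}\circ\widehat{P}_{0,t}|_{\Sigma_0(0)}$, and finally invoke (\ref{conta-jet}) to pass from $\beta$ to $F$ at the level of $(k+1)$-jets. The only cosmetic difference is that the paper bundles your first two steps into a single sentence, whereas you separate the restriction to the slice from the replacement of $X_u|_{\Sigma_0(s)}$ by $Y_{u,s}$; your identification of the first step as the only nontrivial one is accurate.
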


\proof
Combining Proposition \ref{O-G} and Remark \ref{Sigma-inv},
we conclude that the $k$-jet of $R_{u,t}|_{\Sigma_0(0)}$ is equal to the
$k$-jet of the flow at time  $ t $ associated to the field $ \widehat{P}_{0,t}^*
\left( Y_{u,t}\right)$. On the other hand, if $ X $ denotes the
Hamiltonian field for the non autonomous Hamiltonian 
$[K_{u,t}\circ\left( \widehat{P}_{0,t}|_{\Sigma(0)}\right)] $, then, 
using that $\widehat{P}_{0,t}|_{\Sigma(0)}: \Sigma(0) \rightarrow \Sigma(t) $ is a
symplectic map, we have:
\begin{align*}
\omega(X,\cdot)|_{\Sigma(0)} 
&=  d \left(K_{u,t}\circ\left(\widehat{P}_{0,t}|_{\Sigma(0)}\right)\right) 
= \widehat{P}_{0,t}^* (d \ K_{u,t}) 
= \widehat{P}_{0,t}^*\omega(\  Y_{u,t}\ ,\  \cdot\  )|_{\Sigma(t)} \\
&= \omega( \widehat{P}_{0,t}^*(\ Y_{u,t})\ ,\  \cdot \ )|_{\Sigma(0)}.
\end{align*}
And, since $ \omega|_{\Sigma(0)}$ is no degenerate, we have that 
$X= \widehat{P}_{0,t}^* (Y_{u,t}) $.
Hence the  $k$-jet of $ \widehat{P}_{0,t}^* (Y_{u,t}) $ at $\theta(0) $ 
is determined by the $(k+1)$-jet of the Hamiltonian 
$[K_{u,t}\circ\left( \widehat{P}_{0,t}|_{\Sigma(0)}\right)] $ at $ \theta(0)$,
that, by (\ref{conta-jet}),  is equal to the $k$-jet of the
Hamiltonian 
$\left[\delta(t) F\circ \left(\widehat{P}_{0,t}|_{\Sigma(0)}\right)\right] $.
This completes the proof.
\qed

\begin{rem}
\label{grupo de lie}
Recall that $ J^k_s(n) $ is a  Lie group with the  group
structure  defined by $j^k f\cdot j^k g =j^k(f\circ g)$.
Let $\mathfrak{ J}^k_s(n) $ be the space  of the $k$-jets  at $ 0$
of symplectic vector fields that are zero at $0$.
We define the  bracket 
$[\cdot,\cdot]^k: 
\mathfrak{J}^k_s(n) \times \mathfrak{J}^k_s(n) \rightarrow \mathfrak{J}^k_s(1)$ 
by
$[j^k X , j^k Y ]^k = -j^k [ X , Y ].$
Since  $ X, Y $ are  zero in the origin, 
$[\cdot,\cdot]^k $ depends only  on the  $k$-jets of $ X $ and $Y$.
Then $ [\cdot,\cdot]^k $ defines a   Lie algebra structure in
$ \frak{J}^k_s(n) $.
Moreover,  
$\frak{J}^k_s(n) $ is the Lie algebra of $ J^k_s(n) $ and 
the  exponential map 
$\exp : \mathfrak{ J}^k_s(n) \rightarrow J^k_s(n) $ verifies 
$\exp( t j^k X ) =j^k \phi_t$, 
where $\phi_t$ is the local flow associated  to $ X $. 
For more details  and  proofs, see \cite[\S IV]{Kolar:1993}.
\end{rem}

\begin{defn}
\label{D-kgeneral} 
A one parameter family  of symplectic maps
$\sigma: [a,b]\rightarrow Sp(n)$
is {\em $k$-general}  if 
there exist times $t_1, \dots, t_d \in [a, b]$ such that 
$( \sigma_{t_1}, \dots , \sigma_{t_d} )$ 
is in $ G_k $.
\end{defn}

Recall the definition of the set $G_k$ in Section \ref{k-general maps}.

\smallskip

Next, we show that it is possible to perturb the $k$-jet of the Poincar\'e map
without changing its $(k-1)$-jet.
To do this, 
let
\[
S_a^k : \mathcal F^k \to \ker(\pi_k) \subset J^k_s(n)
\]
be the map defined by
\[
S_a^k(u)= j^k ( R_{u,a}|_{\Sigma_0(0)} ) (\theta(0))
\]
where $\pi_k : J^{k}_s(n) \rightarrow J^{k-1}_s(n) $ is   the canonical
projection.

\begin{prop}
\label{S_t(F)} 
If the one parameter family of linear symplectic maps
\[
[0,a]\ni  t \mapsto d_{\theta(0)}\widehat{P}_{0,t}|_{\Sigma_0(0)}  \in Sp(n),
\]
is $(k+1)$-general for some $k \geq 2$, 
then  the map $S_a^k$ is  a local submersion in a neighborhood of  $0 \in \mathcal{F}^k$.
\end{prop}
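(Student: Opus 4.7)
The plan is to show that the linearization $d_0 S_a^k : \mathcal{F}^k \to \ker(\pi_k)$ at the zero perturbation is surjective. Since $\ker(\pi_k)$ is finite-dimensional, a continuous surjective linear map into it automatically admits a continuous linear right-inverse, and this suffices to conclude that $S_a^k$ is a submersion on a neighborhood of $0 \in \mathcal{F}^k$.

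The first step is to describe the target. By Remark \ref{grupo de lie}, let $\mathfrak{n} \subset \mathfrak{J}^k_s(n)$ be the subspace of $k$-jets of symplectic vector fields with trivial $(k-1)$-jet. The Hamiltonian vector field map $Q \mapsto j^k X_Q$ identifies $\R_{k+1}[x,y]$, the space of degree-$(k+1)$ homogeneous polynomials on $\Sigma_0(0)$, with $\mathfrak{n}$; indeed, such $X_Q$ has coefficients homogeneous of degree $k$, and conversely a symplectic vector field with $j^{k-1}=0$ is locally Hamiltonian with Hamiltonian $Q + O(|z|^{k+2})$ for some $Q \in \R_{k+1}[x,y]$. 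Moreover, if $j^{k-1}X = j^{k-1}Y = 0$, a direct order count gives $[X,Y] = O(|z|^{2k-1})$, and since $k \geq 2$ we have $2k-1 \geq k+1$, so $[j^kX, j^kY]^k = 0$. Thus $\mathfrak{n}$ is abelian, and the exponential map restricts to a linear isomorphism $\exp : \mathfrak{n} \to \ker(\pi_k)$, yielding the identification $\ker(\pi_k) \cong \R_{k+1}[x,y]$.

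The second step is to compute the derivative on a convenient subfamily of $\mathcal{F}^k$. Fix $\beta$ with $j^{k+1}\beta(0) = F = x_1^{k+1}$ and vary only $\delta \in C_c^\infty((0,a))$. By Lemma \ref{J-S_t-sigma(0)}, $S_a^k(\delta \cdot \beta)$ equals the $k$-jet at $\theta(0)$ of the time-$a$ map of the non-autonomous Hamiltonian $\delta(t)\, F \circ \sigma_t$ on $\Sigma_0(0)$, where $\sigma_t := \widehat{P}_{0,t}|_{\Sigma_0(0)}$. Each vector field $X_{\delta(t) F \circ \sigma_t}$ lies in $\mathfrak{n}$, and by the abelianness from Step 1 the non-autonomous flow at the $k$-jet level is the exponential of the $\mathfrak{n}$-valued integral $\int_0^a j^k X_{\delta(t) F \circ \sigma_t}\, dt$. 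Combined with the linearity of $Q \mapsto X_Q$ in $Q$, the restriction of $d_0 S_a^k$ to this subfamily corresponds under the identification of Step 1 to the linear map
\[
L : C_c^\infty((0,a)) \to \R_{k+1}[x,y], \qquad L(\delta)(z) := \int_0^a \delta(t)\, F(\sigma_t(z))\, dt.
\]

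The third step exploits the $(k+1)$-generality hypothesis. Set $d := d(2n, k+1)$ and pick times $t_1, \ldots, t_d \in [0,a]$ with $(\sigma_{t_1}, \ldots, \sigma_{t_d}) \in G_{k+1}$, so that $\{F \circ \sigma_{t_i}\}_{i=1}^d$ is a basis of $\R_{k+1}[x,y]$. Given any $Q = \sum_{i=1}^d c_i\, F \circ \sigma_{t_i}$ and sufficiently small $\varepsilon > 0$, choose smooth bumps $\eta_{i,\varepsilon}$ with $\supp(\eta_{i,\varepsilon}) \subset (t_i - \varepsilon, t_i + \varepsilon)$ and $\int \eta_{i,\varepsilon} = 1$; continuity of $t \mapsto F \circ \sigma_t$ yields $L(\sum_i c_i\, \eta_{i,\varepsilon}) \to Q$ as $\varepsilon \to 0$. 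The image of $L$ is a linear subspace of the finite-dimensional space $\R_{k+1}[x,y]$, hence closed; being dense, it equals $\R_{k+1}[x,y]$, and therefore $d_0 S_a^k$ is surjective.

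The main obstacle lies in Step 2: passing from the rather abstract statement of Lemma \ref{J-S_t-sigma(0)} to the clean, linear formula for the derivative. This rests on the abelianness of $\mathfrak{n}$, which in turn relies crucially on the hypothesis $k \geq 2$ through the order bound $2k-1 \geq k+1$. Without this bound, bracket contributions would survive at the $k$-jet level and the derivative would become a more complicated, genuinely nonlinear expression in $\delta$.
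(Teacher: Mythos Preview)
Your proof is correct and follows essentially the same route as the paper: compute the derivative of $S_a^k$ on perturbations $\delta\cdot\beta$ via Lemma~\ref{J-S_t-sigma(0)}, use Dirac-type bumps at the times $t_i$ supplied by $(k+1)$-generality, and identify the tangent space of $\ker(\pi_k)$ with $\R_{k+1}[x,y]$. Your explicit appeal to the abelianness of $\mathfrak{n}$ (via $2k-1\geq k+1$) is a clean way to package what the paper obtains by differentiating $\exp(t\,j^k(sX_i))$ directly; the content is the same.

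One notational slip to fix: you set $\sigma_t := \widehat{P}_{0,t}|_{\Sigma_0(0)}$, a \emph{nonlinear} map, but in Step~3 you write $(\sigma_{t_1},\ldots,\sigma_{t_d})\in G_{k+1}\subset Sp(n)^d$, which requires linear maps. What actually enters $L(\delta)$ under your identification is the $(k+1)$-jet of $F\circ\sigma_t$, namely $F\circ d_{\theta(0)}\sigma_t$ (since $F$ is homogeneous of degree $k+1$); thus $L(\delta)(z)=\int_0^a\delta(t)\,F(d_{\theta(0)}\sigma_t\, z)\,dt$, and the basis in Step~3 is $\{F\circ d_{\theta(0)}\sigma_{t_i}\}$. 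You should also take the $t_i$ in the open interval $(0,a)$ so the bump supports stay in $(0,a)$; this is harmless by continuity and openness of $G_{k+1}$.
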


\proof
Let $d=d(2n, k+1)$ be the dimension of the real vector space of homogeneous polynomials 
of degree $k+1$ in the $2n$ variables $ x=(x_1,\dots, x_n) $ and $ y = (y_1,\dots,y_n)$.
Since $ d_{\theta(0)}\widehat{P}_{0,t}|_{\Sigma(0)}$ is a $(k+1)$-general family 
of symplectic linear maps  for $0 \leq t\leq a$, 
there are  $t_1, \dots, t_d \in (0,a) $, such that 
\[ 
\left\{ 
F( d\widehat{P}_{0,t_1} (x, y) ), \dots, F( d\widehat{P}_{0,t_d} (x, y) )  
\right\}
\] 
is a basis  of  $\R_{k+1}[x, y]$,
where $ F(x, y) = x_1^{k+1}\in\R_{k+1}[x, y]$.
For each $ 1\leq i \leq d$ and  $\lambda >0 $ sufficiently small, 
let $\delta_{t_i}^\lambda : \R \rightarrow \R $ be a
$C^\infty $ approximation of the Dirac delta function at the
point  $ t_i $ with support in the interval  $ [ t_i-\lambda, t_i + \lambda] $.
We consider
$u_i \in \m F^k$
defined by
\[
u_i(x_0, x_1, \dots, x_n)= \delta_{t_i}^\lambda(x_0) \beta (x_1, \dots, x_n),
\] 
for $i= 1, \dots, d$.
By  Lemma \ref{J-S_t-sigma(0)} and the  properties of the 
exponential map, as recalled in Remark  \ref{grupo de lie},
we have that
\[
D_{0} S_t^k\cdot ( u_i) 
= \left.\frac{\partial}{\partial s}\right|_{s=0} S^k_t( s\ u_i) 
= \left.\frac{\partial}{\partial s}\right|_{s=0} \mbox{ exp} ( t\ j^k(s\ X_i)),
\] 
where $ X_i $ denotes the  Hamiltonian field  in $ \Sigma(0) $ 
corresponding to the  non autonomous Hamiltonian 
$ [ \delta_{t_i}^\lambda(t) F \circ ( \widehat{P}_{0,t}|_{\Sigma(0)})]$.
Computing  the derivative
with respect to  $t $ in the above equality, we obtain:
\begin{align*}
\frac{d}{dt} ( D_{0} S_t^k\cdot (u_i) ) 
&= \left. \frac{\partial}{\partial s} \right|_{s=0} 
\left( \frac{\partial}{\partial t} \exp( t\ j^k(s \ X_i)) \right) \\
&= \left.\frac{\partial}{\partial s}\right|_{s=0} 
   \left( d_{( t\ j^k (s\ X_i))}  \exp \cdot  j^k(s\ X_i )\right) \\
&= \left.\frac{\partial}{\partial s}\right|_{s=0} j^k(s\ X_i ) 
= j^k X_i .
\end{align*}

Then
\begin{equation}
\label{eq-ds-k} 
D_{0} S_a^k \cdot ( u_i) = \int_0^a \  j^k X_i  \ dt.
\end{equation}

By definition of  $ X_i $ and  (\ref{eq-ds-k}), we have that, 
if $\lambda $ converges to $ 0 $, 
then  $ D_{0} S_a^k\cdot ( u_i)$ converges to 
the $k$-jet at $\theta(0) $ of the Hamiltonian vector field
in $ \Sigma(0) $ corresponding to the autonomous  Hamiltonian 
$H_i = [ F \circ ( \widehat{P}_{0,t_i}|_{\Sigma(0)} ) ]$.
Computing the  $(k+1)$-jet of $H_i$ at $\theta(0) $, we obtain that
\[
j^{k+1} H_i = [ F\circ ( d_{\theta(0)}\widehat{P}_{0,t_i}|_{\Sigma(0)}) ].
\]
Since
$\{ F \circ ( d_{\theta(0)} \widehat{P}_{t_i}|_{\Sigma(0)} ) : 1\leq i \leq d \}$ 
is a basis  for $ \R_{k+1}[x, y]$, 
we have that, for $\lambda  $ sufficiently small, 
$\{ D_{0} S_a^k\cdot ( u_i) : 1 \leq i \leq d \}$ 
is a basis of the Lie  algebra of the Lie subgroup $\ker( \pi_k)$.
Hence the map $S_a^k $ is a local submersion.
\qed

\bigskip

Now, let $k_0\geq 1$ and $Q$, an open invariant subset of $J_s^{k_0}(n)$,
be given as in the hypothesis of Theorem \ref{T-jet}.
If we assume that the derivatives  $ d_{\theta(0)}\widehat{P}_{0,t}|_{\Sigma_0(0)} $ 
are  $k$- general for each $k= 3, \dots, k_0 + 1$,
it follows from the  proposition above 
that, for  each $k= 2, \dots, k_0$, 
the map $S_a^k$  is an open map  in a neighborhood of 
$0\in \mathcal{F}^k \subset \mathcal{F}^1$. 
And this, together with the hypothesis
$j^{k_0} P(\theta,\Sigma,H_0) \in \overline{Q}$, 
implies that
there exists $u \in \mathcal F^1 $ arbitrarily $ C^\infty$-close to zero such that 
the $k_0$-jet of $R_{u,a}|_{\Sigma_0(0)}$ is in $P^{-1} Q$.   
Then, $j^{k_0} P\in Q$.

Then to complete the  proof of Theorem \ref{T-jet}, it only remains to verify that 
it is possible to perform a perturbation that makes the derivatives  
$d_{\theta(0)}\widehat{P}_{0,t}|_{\Sigma_0(0)} $ 
$k$-general, for each $ k = 3, \dots, k_0 + 1$.
This is the content of the next lemma.

\begin{lem}
\label{P-dp-kgen}
For each integer  $ k>2$,  there exists a smooth potential $ u : M \rightarrow\R$,
arbitrarily close to $0$ in the $C^\infty$-topology, such that 
$\theta(t) $ is also a closed orbit of the Hamiltonian flow of $H_u$ and
the one parameter family $t \mapsto d_{\theta(0)} P_{u,t}$, for $t \in[0,a]$,  
is $k$-general. 
\end{lem}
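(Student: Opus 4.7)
The strategy is to establish that the set
$$\mathcal{G}_k := \left\{ u \in \mathcal{F}^1 : \text{the family } [0,a] \ni t \mapsto d_{\theta(0)} \widehat{P}_{u,t}|_{\Sigma_0(0)} \text{ is } k\text{-general} \right\}$$
is open and dense in any small $C^\infty$-neighborhood of $0$ in $\mathcal{F}^1$. Observe that every $u \in \mathcal{F}^1$ preserves $\theta$ as a closed orbit of $H_u$, since the homogeneity condition on $j^2 \beta(0)$ forces $\beta(0) = 0$ and $d\beta(0) = 0$, so $X_u$ vanishes along $\theta$. Openness of $\mathcal{G}_k$ is immediate: if $u \in \mathcal{G}_k$, then for some $t_1, \dots, t_d \in [0,a]$ (with $d = d(2n, k)$) the tuple $(d\widehat{P}_{u,t_i}|_{\Sigma_0(0)})_{i=1}^d$ lies in the open set $G_k \subset Sp(n)^d$ from Proposition \ref{P-kgen}, and this tuple depends $C^\infty$-continuously on $u$.

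For density, fix $u_0 \in \mathcal{F}^1$ near $0$ and select $d$ times $0 < t_1 < t_2 < \cdots < t_d < a$ together with disjoint small open intervals $W_i \subset (t_{i-1}, t_i)$, where $t_0 := 0$. Consider perturbations of the form $u = u_0 + \sum_{i=1}^d v_i$, where each $v_i \in \mathcal{F}^1$ is supported in a tubular neighborhood of $\gamma(W_i)$ inside $W$. Because $W_i$ sits strictly after $t_{i-1}$ and strictly before $t_i$, the perturbation $v_i$ alters the Hamiltonian flow only at times $x_0 \in W_i$, so $d\widehat{P}_{u,t_j}|_{\Sigma_0(0)}$ coincides with $d\widehat{P}_{u_0, t_j}|_{\Sigma_0(0)}$ for every $j < i$. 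By the $k=1$ perturbation theorem of Rifford and Ruggiero (\cite[Proposition 4.2 and Lemma 4.3]{RR:2011}), for each $i$ the assignment $v_i \mapsto d\widehat{P}_{u_0 + v_1 + \cdots + v_i, t_i}|_{\Sigma_0(0)}$ is locally open at $v_i = 0$: its image covers an open neighborhood of the corresponding linear Poincar\'e map in $Sp(n)$. Iterating this sequentially for $i = 1, 2, \dots, d$, the combined map
$$(v_1, \dots, v_d) \longmapsto \bigl( d\widehat{P}_{u_0 + \sum_\ell v_\ell, t_i}|_{\Sigma_0(0)} \bigr)_{i=1}^d$$
is locally open at the origin into $Sp(n)^d$. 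Since $G_k$ is dense in $Sp(n)^d$ by Proposition \ref{P-kgen}, the open image meets $G_k$, producing a $u \in \mathcal{G}_k$ that is $C^\infty$-close to $u_0$, provided the windows $W_i$ and the amplitudes of $v_i$ are chosen sufficiently small.

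The main technical point is the triangular, independent control just described: perturbations supported in the disjoint time windows $W_1, \dots, W_d$ act on the $d$-tuple of linear Poincar\'e maps in such a way that we can adjust each $d\widehat{P}_{u,t_i}|_{\Sigma_0(0)}$ without spoiling the earlier adjustments. The triangular dependence is a consequence of the locality in time of $X_{v_i}$ along $\theta$, and the surjectivity within each window is precisely the content of the $k=1$ Rifford--Ruggiero perturbation result. Once these are in place, density of $\mathcal{G}_k$ follows from density of $G_k$, openness follows from openness of $G_k$, and together they yield the required arbitrarily small potential $u$.
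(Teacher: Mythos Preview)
Your proposal is correct and follows essentially the same route as the paper: choose $d$ times in $(0,a)$, use perturbations in $\mathcal{F}^1$ with disjoint time-supports between consecutive $t_i$, invoke the Rifford--Ruggiero $k=1$ result for local surjectivity of each factor, and conclude via the density of $G_k$ from Proposition~\ref{P-kgen}. Your write-up is in fact more explicit than the paper's about the triangular dependence (why $v_i$ leaves the earlier $d\widehat{P}_{u,t_j}$ untouched and why openness of each factor yields openness of the full map into $Sp(n)^d$), and your framing via openness and density of $\mathcal{G}_k$ is a mild strengthening --- the lemma only needs elements arbitrarily close to $0$, not full density --- but this costs nothing and is harmless.
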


\proof
Consider the map $S^1_t: \m{F}^1 \rightarrow Sp(n)$
defined by 
\[
S^1_{t} (u)= d_{\theta(0)} P_{u,t} .
\]
Given $ k \geq 2 $, let  $d=d(2n, k+1)$ be the dimension of the real vector space 
of homogeneous polynomials of degree $k+1$ in the $2n$ variables 
$ x=(x_1,\dots, x_n) $ and $ y = (y_1,\dots,y_n)$.
Choosing times  $ 0< t_1< \dots < t_d < a$ we define the map
\[
\Phi: \mathcal{F}^1 \rightarrow  Sp(n)^d
\]
given by
\[
\Phi(u)= \left( S^1_{t_1}(u), \dots, S^1_{t_d}(u) \right).
\]
Then,  using  Proposition 4.2 and Lemma 4.3 in \cite{RR:2011},  we obtain that 
each component of the map $\Phi$  is a local submersion near $0\in \mathcal{F}^1$.
Since $G_k \subset Sp(n)^d$ is dense  (Proposition~\ref{P-kgen}),
we can choose  a potential
\[
u=  u_1 +\dots +u_d \in \m{F}^1
\] 
with $\supp(u_i) \cap \gamma([t_{j-1}, t_j])= \emptyset$ for $i\neq j$, 
arbitrarily close to $0$ in the $C^\infty$-topology,
and  such that  the one parameter family of linear symplectic maps associated to 
the linearized Poincar\'e map of the perturbation  $ H_u $ is $k$-general.
\qed

\section{Proof of Theorem \ref{prop-P-Q}}

Let $ Q $ be  an open, dense and invariant subset of $ J_s^k(n) $ and $ c \in \R $.
As in the Kupka-Smale Theorem, for each $ m \in \N$, 
let $ \m{G}(m) \subset C^\infty(M)$ be such that 
$H_u^{-1}(c) $ are  regular and  every closed  orbit of 
the Hamiltonian flow of $H_u$ in $H_u^{-1}(c)$ of period less than or equal to $m$ 
are non-degenerate, for all $ u \in \m {G}(m)$.
Then $ \m{G}(m) $ is an open and dense 
subset of $C^\infty(M)$ with the $  C^\infty$-topology  
(see Lemma 3.3 in \cite{Oliveira:2008}).   
Let $ \m O(m) \subset \m {G}(m)$ be the set of $ C^\infty$-potentials $ u:M\to \R $ 
such that the $k$-jet of the Poincar\'e map of every closed orbit of 
the Hamiltonian flow of $H_u$ in $H_u^{-1}(c)$ of period less than or equal to $m$ 
belongs to $ Q$.
Since the set of  periodic orbits of  
the Hamiltonian flow of $H_u$ in $H_u^{-1}(c)$ of period less than or equal to $m$ 
is finite, for all $ u \in \m O(m)$, and by continuity of the Poincar\'e map, 
we have that $ \m O(m) $ is open.
By Theorem \ref{T-jet}, $\m O(m) $ is also 
a $C^\infty$-dense subset of   $C^\infty(M)$.
Therefore 
\[
\m{O}= \bigcap_{m\in \N} \m{O}(m)
\] 
is the residual subset that we were looking for.
\qed

\section*{Acknowledgments}
We are grateful to G. Contreras and M. J. Dias Carneiro, for the helpful conversations.
We also thank the referee, who kindly pointed out a few mistakes.

\bibliographystyle{amsalpha}


\providecommand{\bysame}{\leavevmode\hbox to3em{\hrulefill}\thinspace}
\providecommand{\MR}{\relax\ifhmode\unskip\space\fi MR }
\providecommand{\MRhref}[2]{\href{http://www.ams.org/mathscinet-getitem?mr=#1}{#2}}
\providecommand{\href}[2]{#2}

\end{document}